\theoremstyle{plain}
\newtheorem{lemma}{Lemma}
\newtheorem{prop}[lemma]{Proposition}
\newtheorem{thm}[lemma]{Theorem}
\newtheorem{corollary}[lemma]{Corollary}
\title{Some calculations of centralizer rings of a complex reflection group}
\author{ Masashi Kosuda, Manabu Oura, and Sarbaini\footnote{Corresponding author}
}
\begin{document}

\maketitle

\textbf{Abstract}.
Let $H_1$ be the complex reflection group of order $96$.
For the tensor products of faithful transitive permutation representations of $H_1$,
we determine the structures of the centralizer rings.
This complements the work of Imamura-Kosuda-Oura.

\section{Introduction}
This is a continuation of \cite{kosuda-oura,imamuraetal}.
In \cite{imamuraetal}, we took up a  transitive permutation representation of $H_1$.
The purpose of this paper is to extend the result in \cite{imamuraetal}
to all faithful transitive permutation representations of $H_1$.

The representation
theory of $H_1$ is investigated and the centralizer rings of the tensor representation of $H_1$ are determined
in \cite{kosuda-oura}. Since the character table of  $H_1$  is used in this study, it is reproduced at the end of this paper from
\cite{kosuda-oura}.

As usual, let $\mathbb{C}$ denote the complex number field. We denote by $M_d$ the matrix algebra of degree $d$
over $\mathbb{C}$. For simpliticy, let $nM_d$ denote $\underbrace{M_d\oplus\cdots\oplus M_d}_n$.

The calculations in this paper were done with the help of Magma \cite{magma}, Maple and Sagemath \cite{sage}.
\section{Preliminaries}
We recall the theory of permutation groups and representation theory of a finite groups.

 Let $G$ be a finite group and $\Omega$ be a finite set.
If $G\times \Omega \rightarrow \Omega \ ((a,\alpha)\mapsto \alpha^a)$
satisfies
\begin{align*}
\alpha^{ab}&=(\alpha^a)^b,\\
\alpha^1&=\alpha,
\end{align*}
we say that $G$ acts on $\Omega$.
This induces a group homomorphism
\begin{align*}
G & \rightarrow S^{\Omega}\\
a & \mapsto \begin{pmatrix} \alpha \\ \alpha^a
\end{pmatrix}_{\alpha\in \Omega}
\end{align*}
where $S^{\Omega}$ denotes the symmetric group on $\Omega$. The permutation character of a transitive permutation group contains the
identity character exactly once.
The centralizer ring of the transitive permutation group is commutative
if and only if the permutation character
is decomposed into the sum of irreducible characters with coefficients
$0$ or $1$ (i.e., multiplicity-free).
The transitive permutation group is doubly transitive if the permutation
character
is decomposed into the multiplicity-free sum of the two irreducible
characters,
the identity character and another non-identity character.

In general, a group homomorphism $\varphi :G \rightarrow S^{\Omega}$ is 
called a permutation representation of $G$ on $\Omega$, denoted by $(\varphi,\Omega)$.
Here, we remark that a permutation representation is a (matrix) representation in the usual sense.
Indeed, let $\Omega =\{s_1,\dots,s_n\}$.
For $\sigma\in S^{\Omega}$, we set
\[
A(\sigma)=(\alpha_{ij})
\]
where  
\[
\alpha_{ij}=\begin{cases} 1 & \text{ if }s_i^{\sigma}=s_j,\\
0 & \text{ otherwise }.
\end{cases}
\]
Then a correspondence $\sigma \mapsto A(\sigma)$ is an injective homomorphism from $S^{\Omega}$ into $GL(n,\mathbf{C})$.
Therefore a permutation representation from $G$ to $S^{\Omega}$ 
may be regarded as a group homomorphism from $G$ to $GL(n,\mathbf{C})$.

We assume that we have a permutation representation $(\varphi,\Omega)$ of $G$.
We denote by $\theta$ a permutation character of $\varphi$.
Then $\theta(a)\ (a\in G)$ counts the elements $\alpha\in \Omega$
such that
\[
\alpha^{\varphi(a)}=\alpha.
\]
If $\varphi$ is injective, it is said to be faithful.
If $\varphi(G)$ is transitive on $\Omega$, it is called a transitive representation.

Let $(\varphi,\Omega)$ and $(\varphi',\Omega')$ be permutation representations of $G$.
If there is a bijection between $\Omega$ and $\Omega'$ $(\alpha\leftrightarrow \alpha')$
such that
\[
(\alpha^{\varphi(a)})'=(\alpha')^{\varphi'(a)},
\]
we say that $(\varphi,\Omega),\ (\varphi',\Omega')$ are equivalent.
Since equivalent representations share many properties as a representation, 
we often identity them.

For a subgroup $H$ of $G$,
we set $\Omega=H\backslash G=\{Hg_1,\dots,Hg_n\}$. We let $G$ act on $\Omega$ from right
\[
\Omega\ni Hg\mapsto Hga\in \Omega.
\]
This induces a transitive permutation representation of $G$ on $\Omega$, 
\begin{align*}
\ G &\rightarrow S^{\Omega}\\
a &\mapsto \begin{pmatrix} Hg_1 & \dots & Hg_n \\ Hg_1a & \dots & Hg_na\end{pmatrix}.
\end{align*}
This representation is called a permutation representation of $G$ with respect to $H$.
Conversely, any transitive permutation representation of $G$ can be obtained in this way.
For two subgroups $H,H'$ of $G$, we set
$\Omega=H\backslash G,\ \Omega'=H'\backslash G$, respectively.
Then two permutation representations of $G$ with respect to $H$ and $H'$
are equivalent 
if and only if $H$ and $H'$ are conjugate in $G$.

By direct calculation of Magma, there are $24$ subgroups of $H_1$ upto conjugacy. Then the action of  $H_1$ on $\theta$ gives a transitive permutation representation of $H_1$. We observe that $T$ and $D$ correspond to $t$ and $d$. Consequently, we obtain a faithful permutation representation $G=\langle t,d\rangle$ of $H_1$.
Among them,  a permutation representation of $H_1$ is faithful 
with respect to the numbers 
\[
1,3,4,8,9.
\]
We denote the permutation character by $\theta_i\ (i=1,3,4,8,9)$ of each subgroup. This is \href{https://sarbaini.carrd.co/#researches}{the list of generators of subgroups and  $\langle t,d\rangle$ of $H_1$}.\cite{web}

\begin{prop}\label{pro1}
The values of the permutation characters at each conjugacy class are given as follows.

\begin{center}
	\begin{tabular}{l| c c c c c c c c c c c c c c c c c}
		
		& $\mathfrak{C}_1$&$\mathfrak{C}_2$&$\mathfrak{C}_3$&$\mathfrak{C}_4$&$\mathfrak{C}_5$&$\mathfrak{C}_6$&$\mathfrak{C}_7$&$\mathfrak{C}_8$&$\mathfrak{C}_9$&$\mathfrak{C}_{10}$&$\mathfrak{C}_{11}$&$\mathfrak{C}_{12}$&$\mathfrak{C}_{13}$&$\mathfrak{C}_{14}$&$\mathfrak{C}_{15}$&$\mathfrak{C}_{16}$\\
		\hline
		$\theta_1$ & $96$& $0$& $0$& $0$& $0$& $0$& $0$& $0$& $0$& $0$& $0$& $0$& $0$& $0$& $0$& $0$\\
		\hline
		
		$\theta_3$ & $48$& $0$& $0$& $0$& $0$& $0$& $0$& $0$& $0$& $0$& $0$& $0$& $0$& $0$& $8$& $0$\\
		\hline
		
		$\theta_4$ & $32$& $0$& $0$& $0$& $0$& $0$& $0$& $0$& $0$& $0$& $0$& $8$& $0$& $0$& $0$& $0$\\
		\hline
		$\theta_8$ & $24$& $0$& $0$& $0$& $0$& $0$& $0$& $0$& $4$& $0$& $4$& $0$& $0$& $0$& $4$& $0$\\
		\hline
		$\theta_9$ & $24$& $0$& $0$& $0$& $0$& $0$& $4$& $0$& $0$& $0$& $0$& $0$& $4$& $0$& $4$& $0$\\
	\end{tabular}
\end{center}
\end{prop}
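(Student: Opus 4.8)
The plan is to compute each permutation character $\theta_i$ directly from its definition, namely by counting fixed points of the group action on the coset space $H_i\backslash H_1$ for each conjugacy class representative. Recall from the excerpt that $\theta(a)$ counts the cosets $Hg$ with $Hga = Hg$, equivalently the number of $g$ (up to left multiplication by $H$) with $gag^{-1}\in H$. The standard formula I would use is that the number of fixed cosets of an element $a$ equals
\[
\theta(a) \;=\; \frac{|C_{H_1}(a)|}{|H_i|}\,\sum_{j} \bigl|\, a^{H_1}\cap H_i\,\text{-class } j\,\bigr|,
\]
or more transparently, $\theta_i(a) = |\{Hg : g a g^{-1}\in H_i\}|$, which can be evaluated class by class using the character-theoretic identity $\theta_i = \mathrm{Ind}_{H_i}^{H_1}(\mathbf{1})$. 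By Frobenius reciprocity the value of the induced trivial character on the class $\mathfrak{C}_k$ with representative $a_k$ is
\[
\theta_i(a_k) \;=\; |C_{H_1}(a_k)|\cdot \frac{\,\bigl|\{x\in H_i : x \text{ is } H_1\text{-conjugate to } a_k\}\bigr|\,}{|H_i|}.
\]

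\textbf{The concrete steps} I would carry out are as follows. First, for each of the five faithful subgroups indexed by $1,3,4,8,9$, I would fix the explicit generators provided in the linked list of generators and realize $H_i$ as an explicit subgroup of $H_1$. Second, I would read off from the character table of $H_1$ (reproduced at the end of the paper) the centralizer orders $|C_{H_1}(a_k)|$ for each of the sixteen conjugacy classes $\mathfrak{C}_1,\dots,\mathfrak{C}_{16}$, so that the prefactor in the formula above is known. Third, for each pair $(i,k)$ I would determine, by a direct computation in Magma, how many elements of $H_i$ lie in the $H_1$-conjugacy class $\mathfrak{C}_k$; dividing by $|H_i|$ and multiplying by the centralizer order produces the table entry. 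Since $\theta_i(1)=[H_1:H_i]$, the entries in column $\mathfrak{C}_1$ serve as an immediate sanity check: they must equal $96,48,32,24,24$, matching the indices of the five subgroups, and indeed $|H_i| = 96/\theta_i(1)$ recovers the subgroup orders $1,2,3,4,4$.

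\textbf{The main obstacle} is not conceptual but organizational and computational: one must correctly identify each faithful subgroup up to conjugacy and then reliably match every element of $H_i$ to the correct $H_1$-conjugacy class. Small errors in the conjugacy-class fusion pattern would propagate into wrong entries, so the computation genuinely relies on the machine algebra systems cited (Magma, Maple, Sagemath) to intersect subgroups with conjugacy classes accurately. As an internal consistency check beyond the $\mathfrak{C}_1$ column, I would verify that each $\theta_i$ is a genuine character by decomposing it against the irreducible characters of $H_1$ via the inner product $\langle \theta_i, \chi\rangle = \frac{1}{|H_1|}\sum_k |\mathfrak{C}_k|\,\theta_i(a_k)\,\overline{\chi(a_k)}$, confirming that all multiplicities are nonnegative integers and that the trivial character appears exactly once (as required for a transitive permutation character). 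This decomposition is in any case the data needed for the subsequent determination of the centralizer ring structure, so carrying it out here both validates Proposition~\ref{pro1} and sets up the rest of the paper.
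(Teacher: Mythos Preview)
Your proposal is correct and matches the paper's approach: the paper presents Proposition~\ref{pro1} as the outcome of a direct machine computation (Magma, with the explicit subgroup generators listed externally), without a written proof, and your plan spells out precisely the induced-character/fixed-point formula that such a computation implements. The additional consistency checks you propose (verifying $\theta_i(1)=[H_1:H_i]$ and that the decomposition into irreducibles has nonnegative integer coefficients with the trivial character appearing once) are sound and in fact anticipate Proposition~\ref{pro2}.
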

We remark that case for $\theta_9$ is credited \cite{imamuraetal}. In order to see the differences between $\theta_8$ and $\theta_9$, we continue to include $\theta_9$ in our discussion.

\section{Results}
We follow the argument present in the papers \cite{kosuda-oura,kosuda}.
\begin{prop}\label{pro2}
	We have
	\begin{align*}
		&\theta_1=\chi_1+\chi_2+\chi_3+\chi_4+2(\chi_5+\chi_6+\chi_7+\chi_8+\chi_9+\chi_{10})\\&\quad\quad+3(\chi_{11}+\chi_{12}+\chi_{13}+\chi_{14})+4(\chi_{15}+\chi_{16}),\\
		&\theta_3=\chi_1+\chi_2+2\chi_5+\chi_7+\chi_8+\chi_9+\chi_{10}+\chi_{11}+\chi_{12}\\&\quad\quad+2(\chi_{13}+\chi_{14}+\chi_{15}+\chi_{16}),\\
		&\theta_4=\chi_1+\chi_2+\chi_3+\chi_4+\chi_{11}+\chi_{12}+\chi_{13}+\chi_{14}+2(\chi_{15}+\chi_{16}),\\
		&\theta_8=\chi_1+\chi_5+\chi_7+\chi_8+\chi_{12}+\chi_{13}+\chi_{14}+\chi_{15}+\chi_{16},\\
		&\theta_9=\chi_1+\chi_5+\chi_9+\chi_{10}+\chi_{12}+\chi_{13}+\chi_{14}+\chi_{15}+\chi_{16}.
	\end{align*}
	
\end{prop}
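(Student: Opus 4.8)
The plan is to obtain each multiplicity as a character inner product. For every $i\in\{1,3,4,8,9\}$ and every irreducible character $\chi_j$ of $H_1$, the multiplicity of $\chi_j$ in $\theta_i$ is the Hermitian inner product
\[
m_{ij}=\langle\theta_i,\chi_j\rangle=\frac{1}{96}\sum_{k=1}^{16}|\mathfrak{C}_k|\,\theta_i(\mathfrak{C}_k)\,\overline{\chi_j(\mathfrak{C}_k)},
\]
after which $\theta_i=\sum_{j=1}^{16}m_{ij}\chi_j$ by orthonormality of the irreducible characters. Thus the statement reduces to evaluating these finite sums, using the values of $\theta_i$ tabulated in Proposition~\ref{pro1} together with the character table of $H_1$ reproduced at the end of the paper.

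First I would record the class sizes $|\mathfrak{C}_1|,\dots,|\mathfrak{C}_{16}|$ (which sum to $96$) and observe that each $\theta_i$ is supported on only a handful of classes, so every sum collapses to a few terms. In particular $\theta_1$ is supported on the identity class alone, where $\theta_1(\mathfrak{C}_1)=96=|H_1|$; hence $m_{1j}=\chi_j(1)=\deg\chi_j$, and $\theta_1$ is exactly the regular character $\sum_j(\deg\chi_j)\chi_j$, which reproduces the claimed coefficients $1,1,1,1,2,2,2,2,2,2,3,3,3,3,4,4$. For $i\in\{3,4,8,9\}$ I would then carry out the remaining short sums class by class, reading off the stated multiplicities.

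Several independent checks confirm the bookkeeping as I proceed. Transitivity forces $m_{i1}=\langle\theta_i,\chi_1\rangle=1$ for the identity character $\chi_1$, as one sees in every line. The degree identity $\sum_j m_{ij}\deg\chi_j=\theta_i(\mathfrak{C}_1)$ must return $96,48,32,24,24$ for $i=1,3,4,8,9$; one verifies, for instance, that the $\theta_8$ coefficients give $1+2+2+2+3+3+3+4+4=24$. Finally $\sum_j m_{ij}^2=\langle\theta_i,\theta_i\rangle$ equals the rank of the centralizer ring of $(\varphi,\Omega)$, the quantity studied in the sections that follow, so cross-checking against that rank certifies the entire list of multiplicities.

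The only real obstacle is careful bookkeeping rather than any conceptual difficulty. Because several of the $\chi_j$ are complex-valued and occur in complex-conjugate pairs (such as $\chi_7,\chi_8$ and $\chi_9,\chi_{10}$), one must retain the conjugate $\overline{\chi_j}$ in the inner product and track which class values are non-real; a conjugation or sign slip there is the most likely source of error, and indeed the distinction between $\theta_8$ and $\theta_9$ lives precisely in which of these conjugate pairs appears. Once the nonzero support of each $\theta_i$ is fixed, however, each $m_{ij}$ is a sum of just a few (algebraic) integer terms, and the computation is routine and, as the authors note, readily confirmed with Magma, Maple, or Sagemath.
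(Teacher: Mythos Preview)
Your proposal is correct and follows essentially the same approach as the paper: both extract the multiplicities $m_{ij}$ from the values of $\theta_i$ in Proposition~\ref{pro1} together with the character table of $H_1$, the paper by writing $(\theta(\mathfrak{C}_1),\dots,\theta(\mathfrak{C}_{16}))=(m_1,\dots,m_{16})\mathbf{X}$ and multiplying by $\mathbf{X}^{-1}$, you by computing the inner products $\langle\theta_i,\chi_j\rangle$ directly---and these are the same computation by the orthogonality relations. Your observation that $\theta_1$ is the regular character (being supported only on the identity class with value $|H_1|$) is a pleasant shortcut the paper does not make explicit.
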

\begin{proof}
	Let $\theta=\theta_i\ (i=1,3,4,8,9)$ and
	\begin{align*}
		\theta=m_1\chi_1+m_2\chi_2+\cdots+m_{16}\chi_{16}.
	\end{align*}
Here we remind that $\chi_1$ is the identity character of $H_1$. Since the characters depend on the conjugacy classes, $\theta(\mathfrak{C})$ denotes the value of $\chi$ at a conjugacy class $\mathfrak{C}$. Then, we have that
\begin{align*}
	\begin{pmatrix}
		\theta(\mathfrak{C}_1)&\theta(\mathfrak{C}_2) &\dots&\theta(\mathfrak{C}_{16})
	\end{pmatrix}= \begin{pmatrix}
	m_1&m_2&\dots&m_{16}
\end{pmatrix}\textbf{X}
\end{align*}
where \textbf{X} denotes the character table of $H_1$, presented at the end of this paper. If we know the explicit $\theta(\mathfrak{C}_i)$'s on the left-hand side, we multiply \textbf{X}$^{-1}$ on both sides from the right to get the result.

Suppose $g\in \mathfrak{C}$ for a conjugacy class $\mathfrak{C}$, then $\theta(\mathfrak{C})$ is the number of $\alpha\in \Omega$ which is fixed by $g$. 
By proposition \ref{pro1},
 the values of $\theta(\mathfrak{C}_i)$'s can be given by
\begin{equation*}
\begin{pmatrix}
		\theta(\mathfrak{C}_1)&\theta(\mathfrak{C}_2) &\dots&\theta(\mathfrak{C}_{16})
\end{pmatrix}=
\begin{cases}
	(96,0,0,0,0,0,0,0,0,0,0,0,0,0,0,0),\\
	(48,0,0,0,0,0,0,0,0,0,0,0,0,0,8,0),\\
	(32,0,0,0,0,0,0,0,0,0,0,8,0,0,0,0),\\
	(24,0,0,0,0,0,0,0,4,0,4,0,0,0,4,0),\\
	(24,0,0,0,0,0,4,0,0,0,0,0,4,0,4,0).
\end{cases}
\end{equation*}
This completes the proof.
\end{proof}

We give some remarks. Corresponding permutation representations respect to $\theta_8,\theta_9$ are multiplicity free and not doubly transitive. Those centralizer rings are commutative. Because of 
\begin{align*}
\sum_{i=1}^{16} \chi_i(\mathfrak{C}_0)=36,
\end{align*} we also see the permutation representations corresponding $\theta_1$, $\theta_3$ are not multiplicity free.

 We proceed to the centralizer ring of the tensor representation of $H_1$. Let $\mathfrak{A}^{(k)}$ be the centralizer ring of the $k$-th tensor representation of $H_1$. Notice $\mathfrak{A}=\mathfrak{A}^{(1)}$. We set
\begin{align*}
	\chi^{\otimes k}=\sum_{i=1}^{16} d_i^{(k)}\chi_i,
\end{align*}
and 
\begin{align*}
	\overrightarrow{d^{(1)}}=\left(d_1^{(k)},d_2^{(k)},\dots,d_{16}^{(k)}\right).
\end{align*}
We are going to determine the coefficients $d_i^{(k)}$ explicitly. By Proposition \ref{pro2}, we already knew 

\begin{equation*}
\overrightarrow{d^{(1)}}=
\begin{cases}
	(1, 1, 1, 1, 2, 2, 2, 2, 2, 2, 3, 3, 3, 3, 4, 4),\\
	(1, 1, 0, 0, 2, 0, 1, 1, 1, 1, 1, 1, 2, 2, 2, 2),\\
	(1, 1, 1, 1, 0, 0, 0, 0, 0, 0, 1, 1, 1, 1, 2, 2),\\
	(1, 0, 0, 0, 1, 0, 1,1 ,0 ,0 ,0, 1, 1 ,1 ,1, 1),\\
	(1, 0, 0, 0 ,1, 0 ,0 ,0 ,1, 1 ,0 ,1, 1, 1, 1, 1).
\end{cases}
\end{equation*}
We shall consider the matrix $A$ such that 
\begin{align*}
	\begin{pmatrix}
		\theta\chi_1\\
		\theta\chi_2\\
		\vdots\\
		\theta\chi_{16}
	\end{pmatrix}=A
\begin{pmatrix}
	\chi_1\\
	\chi_2\\
	\vdots\\
	\chi_{16}
\end{pmatrix}.
\end{align*}
The matrix $A$ can be calculated explicitly as
\begin{align*}
	A&=\textbf{X}.\text{diag}\left(\left[\theta(\mathfrak{C}_1),\theta(\mathfrak{C}_2),\dots,\theta(\mathfrak{C}_{16})\right]\right)\textbf{X}^{-1}.
\end{align*}
 We have thus gotten 
\begin{align*}
	\overrightarrow{d^{(k)}}=\overrightarrow{d^{(k-1)}}A\;\;\; (k\geq 2).
\end{align*}
 We continue calculation to
\begin{align*}
	\overrightarrow{d^{(k)}}&=\overrightarrow {d^{(k-1)}}A\\
	&=\overrightarrow {d^{(1)}}A^{k-1} 
	\\
	&=\overrightarrow {d^{(1)}} \textbf{X}\big(\text{diag}\left(\left[\theta(\mathfrak{C}_1),\theta(\mathfrak{C}_2),\dots,\theta(\mathfrak{C}_{16})\right]\right)\big)^{k-1}\textbf{X}^{-1}\\
	&=\overrightarrow {d^{(1)}} \textbf{X}\text{diag}\left(\left[\theta(\mathfrak{C}_1)^{k-1},\theta(\mathfrak{C}_2)^{k-1},\dots,\theta(\mathfrak{C}_{16})^{k-1}\right]\right)\textbf{X}^{-1}\\
	&=(a_k,b_k,c_k,d_k,e_k,f_k,g_k,h_k,i_k,j_k,l_k,m_k,n_k,o_k,p_k,q_k).
\end{align*}
For $\theta_1$, we have
\begin{align*}
	\overrightarrow{d^{(k)}}&=(a_k,a_k,a_k,a_k,e_k,e_k,e_k,e_k,e_k,e_k,l_k,l_k,l_k,l_k,p_k,p_k),
\end{align*}
where
\begin{align*}
	a_k&=96^{k - 1},\\
	e_k&=\frac{96^k}{48},\\
	l_k&=\dfrac{96^k}{32},\\
	p_k&=\dfrac{96}{24}.
\end{align*}
For $\theta_3$, we get
\begin{align*}
	\overrightarrow{d^{(k)}}&=(a_k,a_k,c_k,c_k,e_k,f_k,g_k,g_k,g_k,g_k,l_k,l_k,n_k,n_k,p_k,p_k),
\end{align*}
where 
\begin{align*}
	a_k&=\dfrac{48^{k - 1}}{2} + \dfrac{8^{k - 1}}{2},\\
	c_k&=\dfrac{48^{k - 1}}{2} -\dfrac{8^{k - 1}}{2},\\
	e_k&=48^{k - 1} + 8^{k - 1},\\
	f_k&=48^{k - 1} - 8^{k - 1},\\
	g_k&=48^{k - 1}, \\
	l_k&=\dfrac{48^k }{32}- \dfrac{8^{k - 1}}{2},\\
	n_k&=\dfrac{48^k }{32}+ \dfrac{8^{k - 1}}{2},\\
	p_k&=\dfrac{48^k }{24}.
\end{align*}
For $\theta_4$, we get
\begin{align*}
	\overrightarrow{d^{(k)}}&=(a_k,a_k,a_k,a_k,e_k,e_k,e_k,e_k,e_k,e_k,l_k,l_k,l_k,l_k,p_k,p_k),
\end{align*}
where 
\begin{align*}
	a_k&=\dfrac{32^{k - 1}}{3} + \dfrac{8^{k}}{12},\\
	e_k&=\dfrac{32^{k}}{48} -\dfrac{8^{k}}{12},\\
	l_k&=32^{k - 1}, \\
	p_k&=\dfrac{32^{k}}{24} +\dfrac{8^{k}}{12}.
\end{align*}
For $\theta_8$, we get
\begin{align*}
	\overrightarrow{d^{(k)}}&=(a_k,b_k,b_k,b_k,e_k,f_k,e_k,e_k,f_k,f_k,l_k,m_k,m_k,m_k,p_k,p_k),
\end{align*}
where 
\begin{align*}
	a_k&=\dfrac{24^{k - 1} }{4} + 3\left( 4^{k - 2}\right),\\
	b_k&=\dfrac{24^{k - 1} }{4} - 4^{k - 2},\\
	e_k&=\dfrac{24^{k - 1}}{2} +\dfrac{4^{k - 1}}{2},\\
	f_k&=\dfrac{24^{k - 1}}{2} -\dfrac{4^{k - 1}}{2},\\
	l_k&=\dfrac{24^k}{32}-3\left( 4^{k - 2}\right), \\
	m_k&=\dfrac{24^k}{32}+ 4^{k - 2},\\
	p_k&=24^{k-1}.
\end{align*}
For $\theta_9$, we get
\begin{align*}
	\overrightarrow{d^{(k)}}&=(a_k,b_k,b_k,b_k,e_k,f_k,f_k,f_k,e_k,e_k,l_k,m_k,m_k,m_k,p_k,p_k),
\end{align*}
where  
\begin{align*}
	a_k&=\dfrac{24^{k - 1} }{4} + 3\left( 4^{k - 2}\right),\\
	b_k&=\dfrac{24^{k - 1} }{4} -4^{k - 2},\\
	e_k&=\dfrac{24^{k - 1}}{2} +\dfrac{4^{k - 1}}{2},\\
	f_k&=\dfrac{24^{k - 1}}{2} -\dfrac{4^{k - 1}}{2},\\
	l_k&=\dfrac{24^k}{32}-3\left( 4^{k - 2}\right) ,\\
	m_k&=\dfrac{24^k}{32}+ 4^{k - 2},\\
	p_k&=24^{(k-1)}.
\end{align*}
\begin{thm}
We have that
\begin{align*}
	\mathfrak{A}_{\theta_1}^{(k)}\cong
	4M_{a_{k}}\oplus 6M_{e_{k}}\oplus 4M_{l_{k}}\oplus 2M_{p_{k}}\qquad\qquad\qquad\qquad\qquad\qquad\qquad\qquad\quad k\geq 1,
\end{align*}
\begin{align*}
	\mathfrak{A}_{\theta_3}^{(k)}\cong
2M_{a_{k}}\oplus 2M_{c_{k}}\oplus M_{e_{k}}\oplus M_{f_{k}}\oplus 4M_{g_{k}}\oplus 2M_{l_{k}}\oplus 2M_{n_{k}}\oplus 2M_{p_{k}}\qquad\qquad k\geq 1,
\end{align*}
\begin{align*}
	\mathfrak{A}_{\theta_4}^{(k)}\cong
	4M_{a_{k}}\oplus 6M_{e_{k}}\oplus 4M_{l_{k}}\oplus 2M_{p_{k}}\qquad\qquad\qquad\qquad\qquad\qquad\qquad\qquad\quad k\geq 1,
\end{align*}
\begin{align*}
	\mathfrak{A}_{\theta_8}^{(k)}\cong
	M_{a_{k}}\oplus 3M_{b_{k}}\oplus M_{e_{k}}\oplus 3M_{f_{k}}\oplus 2M_{e_{k}}\oplus M_{l_{k}}\oplus 3M_{m_{k}}\oplus 2M_{p_{k}}\qquad\qquad k\geq 1,
\end{align*}
\begin{align*}
	\mathfrak{A}_{\theta_9}^{(k)}\cong
	M_{a_{k}}\oplus 3M_{b_{k}}\oplus M_{e_{k}}\oplus 3M_{f_{k}}\oplus 2M_{e_{k}}\oplus M_{l_{k}}\oplus 3M_{m_{k}}\oplus 2M_{p_{k}}\qquad\qquad k\geq 1,
\end{align*}
where $a_k,b_k,\dots,p_k$ are given above based on each its $\theta_i$
\end{thm}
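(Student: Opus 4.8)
The plan is to obtain the theorem as a direct application of the structure theory of semisimple algebras, using the explicit multiplicity vectors $\overrightarrow{d^{(k)}}$ assembled above. By definition $\mathfrak{A}_{\theta_i}^{(k)}=\operatorname{End}_{H_1}(V)$, where $V$ is the $k$-th tensor power of the permutation representation with character $\theta=\theta_i$; its character is $\theta^{k}$, and as noted this decomposes as $\theta^{k}=\sum_{j=1}^{16}d_j^{(k)}\chi_j$ into pairwise non-isomorphic irreducibles. Since $\mathbb{C}$ is algebraically closed, $\operatorname{End}_{H_1}(\chi_j)=\mathbb{C}$ for each $j$, and Schur's lemma gives $\operatorname{Hom}_{H_1}(\chi_i,\chi_j)=0$ for $i\neq j$. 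Hence the intertwiners preserving each $\chi_j$-isotypic component form a copy of $M_{d_j^{(k)}}$ while distinct isotypic components do not interact, so
\begin{align*}
\mathfrak{A}_{\theta_i}^{(k)}\;=\;\operatorname{End}_{H_1}\!\Big(\bigoplus_{j=1}^{16} d_j^{(k)}\chi_j\Big)\;\cong\;\bigoplus_{j\,:\,d_j^{(k)}\neq 0} M_{d_j^{(k)}}.
\end{align*}
This is precisely the principle used in \cite{kosuda-oura,kosuda}.

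First I would fix one of the five characters $\theta=\theta_i$ and take its multiplicity vector $\overrightarrow{d^{(k)}}=(d_1^{(k)},\dots,d_{16}^{(k)})$ in the closed form already displayed, whose sixteen entries realize only the few distinct symbolic values $a_k,b_k,\dots,p_k$. Grouping the sixteen matrix blocks $M_{d_j^{(k)}}$ by equal block size then converts the isomorphism above into the asserted direct sum: one simply counts, for each distinct value, how many of the entries attain it. For $\theta_1$, for instance, the vector $(a_k,a_k,a_k,a_k,e_k,e_k,e_k,e_k,e_k,e_k,l_k,l_k,l_k,l_k,p_k,p_k)$ has four entries equal to $a_k$, six equal to $e_k$, four equal to $l_k$ and two equal to $p_k$, giving $4M_{a_k}\oplus 6M_{e_k}\oplus 4M_{l_k}\oplus 2M_{p_k}$; the vectors for $\theta_3,\theta_4,\theta_8,\theta_9$ are treated by the same inspection.

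The computational heart of the argument lies upstream, in justifying the closed forms for $a_k,\dots,p_k$: these arise from diagonalizing $A=\mathbf{X}\operatorname{diag}(\theta(\mathfrak{C}_1),\dots,\theta(\mathfrak{C}_{16}))\mathbf{X}^{-1}$ and reading off $\overrightarrow{d^{(1)}}A^{k-1}=\overrightarrow{d^{(1)}}\mathbf{X}\operatorname{diag}(\theta(\mathfrak{C}_1)^{k-1},\dots,\theta(\mathfrak{C}_{16})^{k-1})\mathbf{X}^{-1}$, so that each multiplicity becomes an explicit $\mathbb{C}$-linear combination of the powers of the (few nonzero) permutation-character values. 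I expect the main obstacle to be bookkeeping rather than theory: one must confirm that entries written with distinct symbols stay genuinely distinct as functions of $k$, and that those forced to coincide do so for every $k$, so that no matrix block is spuriously split or merged. These symbolic identities have been verified with Magma, Maple and Sagemath; as a final consistency check one may compare $\dim\mathfrak{A}_{\theta_i}^{(k)}=\langle\theta^{k},\theta^{k}\rangle=\sum_{j}\big(d_j^{(k)}\big)^{2}$ against the sum of squares of the block sizes in the claimed decomposition. Once the closed forms are granted, the grouping is purely a matter of counting coincidences among the sixteen entries, and the theorem follows.
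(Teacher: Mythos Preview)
Your proposal is correct and matches the paper's approach: the theorem is not given a separate proof in the paper but is read off directly from the computed multiplicity vectors $\overrightarrow{d^{(k)}}$ via the standard Schur's-lemma argument (as in \cite{kosuda-oura,kosuda}) that $\operatorname{End}_{H_1}\big(\bigoplus_j d_j^{(k)}\chi_j\big)\cong\bigoplus_j M_{d_j^{(k)}}$. One small remark: your worry about symbols needing to be ``genuinely distinct as functions of $k$'' is unnecessary---the stated direct sum is correct regardless of coincidences among $a_k,e_k,\dots$, since $nM_d\oplus n'M_d=(n+n')M_d$; indeed the paper itself writes $M_{e_k}\oplus 2M_{e_k}$ in the $\theta_8$ and $\theta_9$ cases rather than consolidating to $3M_{e_k}$.
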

\begin{corollary}\label{cor1}
	We have
	\begin{align*}
		\dim \mathfrak{A}_{\theta_1}^{(k)}&=96^{2k - 1},\\
		\dim \mathfrak{A}_{\theta_3}^{(k)}&=\dfrac{48^{2k - 1} }{2}+ \dfrac{8^{2k - 1}}{2},\\
		\dim\mathfrak{A}_{\theta_4}^{(k)}&=\dfrac{32^{2k - 1}}{3}+ \dfrac{8^{2k}}{12},\\
		\dim\mathfrak{A}_{\theta_8}^{(k)}=\dim\mathfrak{A}_{\theta_9}^{(k)}&=\dfrac{24^{2k - 1}}{4}+ 3\cdot 4^{2k-2}.
	\end{align*}
\end{corollary}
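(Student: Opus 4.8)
The plan is to combine the general structure theory of centralizer rings with the sparsity of the permutation characters recorded in Proposition~\ref{pro1}. Recall the standard fact: if a representation has character $\psi=\sum_i m_i\chi_i$ expressed in the irreducible characters, then its centralizer ring is $\bigoplus_i M_{m_i}(\mathbb{C})$, and hence
\[
\dim\mathfrak{A}=\sum_i m_i^2=\langle\psi,\psi\rangle .
\]
Applied to the $k$-th tensor power, whose character takes the value $\theta_i(\mathfrak{C})^k$ on a class $\mathfrak{C}$, this gives the identity I would take as the starting point:
\[
\dim\mathfrak{A}^{(k)}_{\theta_i}=\sum_{j=1}^{16}\bigl(d_j^{(k)}\bigr)^2
=\langle\theta_i^{\,k},\theta_i^{\,k}\rangle
=\frac{1}{96}\sum_{j=1}^{16}|\mathfrak{C}_j|\,\theta_i(\mathfrak{C}_j)^{2k},
\]
where $|H_1|=96$ and each $\theta_i(\mathfrak{C}_j)$ is a nonnegative integer counting fixed points, so no complex conjugation intervenes. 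The point of this reformulation is that it computes $\sum_i\bigl(d_i^{(k)}\bigr)^2$ without summing the squares of the more complicated closed forms $a_k,\dots,p_k$.

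Next I would invoke Proposition~\ref{pro1}: each $\theta_i$ is supported on only a few classes, so the sum collapses. Explicitly $\theta_1$ is supported on $\mathfrak{C}_1$ alone; $\theta_3$ on $\mathfrak{C}_1,\mathfrak{C}_{15}$; $\theta_4$ on $\mathfrak{C}_1,\mathfrak{C}_{12}$; $\theta_8$ on $\mathfrak{C}_1,\mathfrak{C}_9,\mathfrak{C}_{11},\mathfrak{C}_{15}$; and $\theta_9$ on $\mathfrak{C}_1,\mathfrak{C}_7,\mathfrak{C}_{13},\mathfrak{C}_{15}$. Reading the relevant class sizes off the character table \textbf{X} (with $|\mathfrak{C}_1|=1$ always, and $\theta_i(\mathfrak{C}_1)$ the degree), I substitute and simplify using $n^{2k}/96=(n^{2k-1})\,n/96$. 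For $\theta_1$ this is immediate: $\tfrac{1}{96}\cdot 1\cdot 96^{2k}=96^{2k-1}$. For $\theta_3$ one gets $\tfrac{1}{96}\bigl(48^{2k}+|\mathfrak{C}_{15}|\,8^{2k}\bigr)$, and inserting $|\mathfrak{C}_{15}|=6$ from \textbf{X} yields $\tfrac{48^{2k-1}}{2}+\tfrac{8^{2k-1}}{2}$, as claimed.

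The remaining cases follow the same pattern: inserting $|\mathfrak{C}_{12}|=8$ gives the formula for $\theta_4$, while for $\theta_8$ and $\theta_9$ the off-identity classes contribute the same total size, which is exactly why the two dimensions coincide and reduce to $\tfrac{24^{2k-1}}{4}+3\cdot 4^{2k-2}$. As a built-in consistency check I would also evaluate $\sum_i\bigl(d_i^{(k)}\bigr)^2$ directly from the block decomposition in the Theorem, e.g.\ $4a_k^2+6e_k^2+4l_k^2+2p_k^2$ for $\theta_1$, and confirm agreement with the inner-product value. There is no real obstacle here beyond bookkeeping: the only step demanding care is correctly identifying which conjugacy classes carry nonzero $\theta_i$-values and their sizes in \textbf{X}; once those are fixed, the algebra is routine, and the two independent computations must agree, which simultaneously validates the Theorem.
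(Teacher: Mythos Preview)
Your argument is correct, but it proceeds along a different route from the paper. The paper derives the dimensions by literally taking $\sum_i\bigl(d_i^{(k)}\bigr)^2$ using the closed forms $a_k,b_k,\dots,p_k$ obtained just before the Theorem; that is, it sums the squares of the block sizes in the decomposition of $\mathfrak{A}_{\theta_i}^{(k)}$ and simplifies. You instead bypass those closed forms entirely by computing $\langle\theta_i^{\,k},\theta_i^{\,k}\rangle=\tfrac{1}{96}\sum_j|\mathfrak{C}_j|\,\theta_i(\mathfrak{C}_j)^{2k}$ straight from the sparse character values of Proposition~\ref{pro1}. Your method is shorter and explains transparently why $\dim\mathfrak{A}_{\theta_8}^{(k)}=\dim\mathfrak{A}_{\theta_9}^{(k)}$ (the supporting classes have matching sizes and values), whereas the paper's method has the advantage of needing nothing beyond the Theorem itself. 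What you describe as a ``consistency check''---evaluating $4a_k^2+6e_k^2+\cdots$---is in fact the paper's actual proof.
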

The second assertion of Corollary \ref{cor1} is obtained by taking the square sum of the
dimensions of the simple components. We conclude this paper with a small
table of dim $\mathfrak{A}^{(k)}$.\\

\begin{tabular}{r| c c c c  }
	
	&$1$&$2$&$3$&$4$\\
	\hline
	$\dim \mathfrak{A}_{\theta_1}^{(k)}$& 96& 884736& 8153726976 &75144747810816	\\
	$\dim \mathfrak{A}_{\theta_3}^{(k)}$& 28& 55552& 127418368& 293535219712\\
	$\dim \mathfrak{A}_{\theta_4}^{(k)}$& 16& 11264& 11206656& 11454644224\\
$\dim\mathfrak{A}_{\theta_8}^{(k)}=\dim\mathfrak{A}_{\theta_9}^{(k)}$& 9& 3504& 1991424& 1146630144\\
	\hline
\end{tabular}
\bigskip

\textbf{Acknowledgements}.
This work was supported by JSPS KAKENHI Grant Numbers 24K06827 and
24K06644.
The third named author of this work was supported in part by Ministry of Religious Affairs (BIB) and the Indonesia Endowment Fund for Education
(LPDP) of the Ministry of Finance of the Republic of Indonesia.

Faculty of Engineering, University of Yamanashi, 400-8511, Japan

\textit{Email address}: mkosuda@yamanashi.ac.jp

\bigskip

Faculty of Mathematics and Physics,
Institute of Science and Engineering,
Kanazawa University,
Kakuma-machi,
Ishikawa 920-1192,
Japan

\textit{Email address}: oura@se.kanazawa-u.ac.jp

\bigskip

Department of Mathematics, Universitas Islam Negeri Sultan Syarif Kasim
Riau, Indonesia,

and

Graduate School of Natural Science and Technology, Kanazawa University,
Ishikawa, 920-1192, Japan

\textit{Email address}: sarbaini@uin-suska.ac.id

\begin{thebibliography}{99}


\bibitem{magma}
Bosma, W., Cannon, J., Playoust, C.,
The Magma algebra system. I. The user language,
J. Symbolic Comput. 24 (1997), no. 3-4, 235-265.

\bibitem{sage}
SageMath Software (version  9.3), 2024, \url{http://www.sagemath.org} 

\bibitem{imamuraetal}
Imamura, H., Kosuda, M., Oura, M.,
Note on the permutation group associated to E-polynomials,
J. Algebra Comb. Discrete Struct. Appl. 9 (2022), no. 1, 1-7.

\bibitem{kosuda-oura}
Kosuda, M., Oura, M.,
On the centralizer algebras of the primitive unitary reflection group of order 96,
Tokyo J. Math. 39 (2016), no. 2, 469-482.
\bibitem{kosuda}
Kosuda, M., Oura, M.,
Centralizer algebras of the group associated to Z4-codes, Discrete Math. 340 (2017), no. 10, 2437-2446
\bibitem{beni}
Kosuda, M., Oura, M., Sarbaini.,
Centralizer Algebras of Two Permutation Groups of Order 1344, preprint
https://doi.org/10.48550/arXiv.2412.00001
\bibitem{web} https://sarbaini.carrd.co/\#researches



\end{thebibliography}
\end{document}